\documentclass[11pt]{article}
\usepackage[a4paper,centering,vscale=0.68,hscale=0.65]{geometry}
\usepackage{amsfonts,amssymb,amsmath,amsthm}
\usepackage{mathrsfs}
\usepackage{hyperref}

\newtheorem{theorem}{Theorem}[section]
\newtheorem{lemma}[theorem]{Lemma}
\newtheorem{proposition}[theorem]{Proposition}
\newtheorem{definition}[theorem]{Definition}

\newcommand{\E}{\mathop{{}\mathbb{E}}}
\newcommand{\cL}{\mathscr{L}}
\newcommand{\cF}{\mathscr{F}}
\renewcommand{\P}{\mathbb{P}}
\newcommand{\erre}{\mathbb{R}}

\renewcommand{\div}{\mathop{\mathrm{div}}}
\newcommand{\norm}[1]{\lVert{#1}\rVert}

\title{On the well-posedness of SPDEs with singular drift in
  divergence form} 

\author{
Carlo Marinelli and Luca Scarpa\\
{\footnotesize \emph{Department of Mathematics, University College London, United Kingdom}}
}

\date{\normalsize January 28, 2017}


\begin{document}
\maketitle

\begin{abstract}
  We prove existence and uniqueness of strong solutions for a class of
  second-order stochastic PDEs with multiplicative Wiener noise and
  drift of the form $\div \gamma(\nabla \cdot)$, where $\gamma$ is a
  maximal monotone graph in $\erre^n \times \erre^n$ obtained as the
  subdifferential of a convex function satisfying very mild
  assumptions on its behavior at infinity. The well-posedness result
  complements the corresponding one in our recent work
  \href{http://arxiv.org/abs/1612.08260}{arXiv:1612.08260} where,
  under the additional assumption that $\gamma$ is single-valued, a
  solution with better integrability and regularity properties is
  constructed. The proof given here, however, is self-contained.
\end{abstract}

\section{Introduction and main result}
Let us consider the stochastic partial differential equation
\begin{equation}
  \label{eq:0}
  du(t) - \div\gamma(\nabla u(t))\,dt \ni B(t, u(t))\,dW(t),
  \qquad u(0)=u_0,
\end{equation}
posed on $L^2(D)$, with $D$ a bounded domain of $\erre^n$ with smooth
boundary. The following assumptions will be in force: (a)
$\gamma$ is the subdifferential of a
lower semicontinuous convex function $k:\erre^n \to \erre_+$ with
$k(0)=0$ and such that
\[
\lim_{|x| \to \infty} \frac{k(x)}{|x|} = + \infty,
\qquad
\limsup_{|x|\to\infty} \frac{k(-x)}{k(x)} < +\infty
\]
(in particular, $\gamma$ is a maximal monotone graph in
$\erre^n \times \erre^n$ whose domain coincides with $\erre^n$); (b)
$W$ is a cylindrical Wiener process on a separable Hilbert space $H$,
supported by a filtered probability space
$(\Omega,\cF,(\cF_t)_{t\in[0,T]},\P)$ satisfying the ``usual
conditions''; (c) $B$ is a map from $\Omega \times [0,T]\times L^2(D)$
to $\cL^2(H,L^2(D))$, the space of Hilbert-Schmidt operators from $H$
to $L^2(D)$, that is Lipschitz-continuous and has linear growth with
respect to its third argument, uniformly with respect to the other
two, and is such that $B(\cdot,\cdot,a)$ is measurable and adapted for
all $a \in L^2(D)$.

Under the additional assumption that $\gamma$ is a (single-valued)
continuous function, we proved in \cite{cm:div} that \eqref{eq:0}
admits a strong solution $u$, which is unique within a set of
processes satisfying mild integrability conditions. The solution of
\cite{cm:div} is constructed pathwise, i.e. for each
$\omega\in\Omega$, so that, as is natural to expect, measurability
problems arise with respect to the usual $\sigma$-algebras on $\Omega
\times [0,T]$ used in the theory of stochastic processes. Precisely
because of such an issue we needed to assume $\gamma$ to be
single-valued.

The purpose of this note is to provide an alternative approach to
establish the well-posedness of \eqref{eq:0} that, avoiding pathwise
constructions, is simpler than that of \cite{cm:div} and does not need
any extra assumption on $\gamma$. The price to pay is that the
solution we obtain here is less regular than that of \cite{cm:div}.
We also refer to \cite{luca} for a related result obtained by
analogous methods.

\medskip

Let us define the concept of solution to \eqref{eq:0} we shall be
working with.
\begin{definition}
\label{def:strong}
  Let $u_0$ be an $L^2(D)$-valued $\cF_0$-measurable random variable. A
  \emph{strong solution} to equation \eqref{eq:0} is a couple
  $(u,\eta)$ satisfying the following properties:
  \begin{itemize}
  \item[(i)] $u$ is a measurable and adapted $L^2(D)$-valued
    process such that
    \[
    u \in L^1(0,T;W^{1,1}_0(D)) 
    \qquad \text{and} \qquad
    B(\cdot,u) \in L^2(0,T;\cL^2(U,L^2(D))) \quad \P\text{-a.s.};
    \]
  \item[(ii)] $\eta$ is a measurable and adapted $L^1(D)^n$-valued process
    such that
    \[
    \eta \in L^1(0,T;L^1(D)^n) \quad\P\text{-a.s.}\,, \qquad
    \eta\in\gamma(\nabla u) \quad\text{a.e.~in } \Omega\times(0,T)\times D;
    \]
  \item[(iii)] one has, as an equality in $L^2(D)$,
   \begin{equation} \label{eq:0i}
   u(t)-\int_0^t{\div\eta(s)\,ds} = u_0
   +\int_0^t B(s,u(s))\,dW(s) \quad \P\text{-a.s.} \quad \forall\, t\in[0,T].
    \end{equation}
  \end{itemize}
\end{definition}
Note that \eqref{eq:0i} has to be intended in the sense of
distributions.  In particular, since $\eta\in L^1(D)^n$, the integrand
in the second term of \eqref{eq:0i} does not, in general, take values
in $L^2(D)$. However, the conditions on $B$ imply that the stochastic
integral in \eqref{eq:0i} is an $L^2(D)$-valued local martingale,
hence the term involving the divergence of $\eta$ turns out to be
$L^2(D)$-valued by comparison.

\medskip

We can now state our main result. Here and in the following
$k^*:\erre^n \to \erre_+$ is the convex conjugate of $k$, defined
as $k^*(y) := \sup_{x\in\erre^n} \bigl( x\cdot y-k(x) \bigr)$.
\begin{theorem}
  \label{thm:main}
  Let $u_0 \in L^2(\Omega,\cF_0;L^2(D))$. Then equation \eqref{eq:0}
  admits a unique strong solution $(u,\eta$) such that
  \begin{align*}
  &\sup_{t\leq T}\, \E\norm{u(t)}_{L^2(D)}^2 
  + \E\int_0^T \norm{u(t)}_{W^{1,1}_0(D)}\,dt < \infty,\\
  &\E\int_0^T\norm{\eta(t)}_{L^1(D)^n}\,dt <\infty,\\
  &\E\int_0^T \bigl( \norm{k(\nabla u(t))}_{L^1(D)} 
              + \norm{k^*(\eta(t))}_{L^1(D)} \bigr)\,dt < \infty.
  \end{align*}
  Moreover, the solution map $u_0 \mapsto u$ is Lipschitz-continuous
  from $L^2(\Omega;L^2(D))$ to $L^\infty(0,T;L^2(\Omega;L^2(D)))$, and
  $u$ is weakly continuous as a function on $[0,T]$ with values in
  $L^2(\Omega;L^2(D))$.
\end{theorem}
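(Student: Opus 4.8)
The plan is to prove existence by a double approximation: first regularize the drift via the Moreau–Yosida approximation $\gamma_\lambda$ of $\gamma$ (equivalently, replace $k$ by its Moreau envelope $k_\lambda$, whose gradient is $\gamma_\lambda$, a monotone Lipschitz function), and then, if needed, also add a small coercive perturbation $\varepsilon\Delta$ to obtain a nondegenerate quasilinear SPDE. For fixed $\lambda,\varepsilon>0$ the equation
\[
du_\lambda - \div\gamma_\lambda(\nabla u_\lambda)\,dt - \varepsilon\Delta u_\lambda\,dt = B(\cdot,u_\lambda)\,dW
\]
is a classical monotone SPDE in the variational (Gelfand triple $V=H^1_0(D)\subset H=L^2(D)\subset V^*$) setting: the operator $v\mapsto -\div\gamma_\lambda(\nabla v)-\varepsilon\Delta v$ is monotone, hemicontinuous, coercive and bounded from $V$ to $V^*$, and $B$ is Lipschitz of linear growth, so the Krylov–Rozovskii / Pardoux theory gives a unique variational solution $u_\lambda$ with $u_\lambda\in L^2(\Omega;C([0,T];H))\cap L^2(\Omega\times(0,T);V)$.

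Next I would derive a priori estimates uniform in $\lambda$ (and $\varepsilon$). Applying Itô's formula to $\tfrac12\norm{u_\lambda(t)}_{L^2(D)}^2$ and using the identity $\langle -\div\gamma_\lambda(\nabla u_\lambda),u_\lambda\rangle = \int_D \gamma_\lambda(\nabla u_\lambda)\cdot\nabla u_\lambda$, together with the Fenchel–Young duality $\gamma_\lambda(\xi)\cdot\xi = k_\lambda(\xi)+k_\lambda^*(\gamma_\lambda(\xi))$ and the fact that $k_\lambda^*=k^*$, I obtain after taking expectations and using the Lipschitz/linear-growth bound on $B$ and Gronwall:
\[
\sup_{t\le T}\E\norm{u_\lambda(t)}_{L^2(D)}^2 + \E\int_0^T\!\!\int_D\bigl(k_\lambda(\nabla u_\lambda)+k^*(\gamma_\lambda(\nabla u_\lambda))\bigr) \le C\bigl(1+\E\norm{u_0}_{L^2(D)}^2\bigr),
\]
with $C$ independent of $\lambda,\varepsilon$ (here $\varepsilon\E\int_0^T\norm{\nabla u_\lambda}_{L^2}^2$ is also controlled but not uniformly useful). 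The superlinear growth $k(x)/|x|\to\infty$ then yields, via a de la Vallée-Poussin / uniform-integrability argument, an equi-integrable bound on $\nabla u_\lambda$ in $L^1(\Omega\times(0,T)\times D)^n$, hence a bound on $u_\lambda$ in $L^1(\Omega\times(0,T);W^{1,1}_0(D))$; similarly the growth hypothesis on $k^*$ (inherited from $\limsup k(-x)/k(x)<\infty$, which keeps $k^*$ from being too flat) gives equi-integrability of $\eta_\lambda:=\gamma_\lambda(\nabla u_\lambda)$ in $L^1$. Set $\eta_\lambda=\gamma_\lambda(\nabla u_\lambda)$; note $-\div\eta_\lambda = u_0 - u_\lambda + \varepsilon\Delta u_\lambda + \int_0^\cdot B\,dW$ in $V^*$.

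Then I would pass to the limit. By the uniform estimates and reflexivity/weak compactness (Dunford–Pettis for the $L^1$ pieces, plus weak-$*$ compactness in $L^\infty(0,T;L^2(\Omega;L^2(D)))$), extract a subsequence with $u_\lambda\rightharpoonup u$, $\nabla u_\lambda\rightharpoonup \nabla u$ weakly in $L^1$, $\eta_\lambda\rightharpoonup\eta$ weakly in $L^1$, and $\varepsilon\Delta u_\lambda\to 0$. Identifying the limit equation \eqref{eq:0i} is then routine. The genuinely hard step, as always in these monotone-operator arguments, is the identification $\eta\in\gamma(\nabla u)$ a.e., because we only have weak $L^1$ convergence and no strong compactness of the gradients. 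I would handle this by a monotonicity (Minty–Browder) argument adapted to the $L^1$ setting: using lower semicontinuity of the convex integral functionals $v\mapsto\E\int_0^T\!\!\int_D k(\nabla v)$ and $w\mapsto\E\int_0^T\!\!\int_D k^*(w)$ under weak $L^1$ convergence, together with the energy inequality obtained from Itô's formula in the limit (which requires care since $u$ need not be $V$-valued — one argues with the pairing $\langle\eta,\nabla u\rangle$ made sense of via the Fenchel inequality), I would show
\[
\limsup_\lambda \E\int_0^T\!\!\int_D \eta_\lambda\cdot\nabla u_\lambda \;\le\; \E\int_0^T\!\!\int_D \eta\cdot\nabla u,
\]
and then combine with $\eta_\lambda\cdot\nabla u_\lambda = k_\lambda(\nabla u_\lambda)+k^*(\eta_\lambda)$, weak lower semicontinuity, and the Fenchel–Young inequality $\eta\cdot\nabla u\le k(\nabla u)+k^*(\eta)$ to force equality $\eta\cdot\nabla u = k(\nabla u)+k^*(\eta)$ a.e., which is exactly $\eta\in\partial k(\nabla u)=\gamma(\nabla u)$. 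Uniqueness and the Lipschitz dependence on $u_0$ follow from a standard computation: for two solutions, Itô's formula for $\norm{u_1-u_2}_{L^2(D)}^2$ gives a term $-\E\int_0^t\!\!\int_D(\eta_1-\eta_2)\cdot(\nabla u_1-\nabla u_2)\le 0$ by monotonicity of $\gamma$, and the $B$-term is handled by its Lipschitz property and Gronwall; weak continuity of $u$ in $L^2(\Omega;L^2(D))$ comes from the $L^\infty_tL^2$ bound plus continuity of the individual terms in \eqref{eq:0i}.
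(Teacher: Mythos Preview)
Your outline has a genuine gap precisely at the point the paper itself flags as the obstacle to the direct approach. You regularize the \emph{multiplicative} equation and obtain only weak convergence $u_\lambda \rightharpoonup u$ in $L^2_{t,\omega,x}$ (and weak-$*$ in $L^\infty_t L^2_{\omega,x}$). To identify the limit equation you must show that $\int_0^\cdot B(s,u_\lambda(s))\,dW(s)$ converges to $\int_0^\cdot B(s,u(s))\,dW(s)$; but $B$ is merely Lipschitz, not linear, so weak convergence of $u_\lambda$ does \emph{not} imply any convergence of $B(\cdot,u_\lambda)$ to $B(\cdot,u)$. Without strong convergence of $u_\lambda$ in $L^2_{t,\omega,x}$ (which you do not have --- there is no compactness in $\omega$) this step fails, and your ``identifying the limit equation \eqref{eq:0i} is then routine'' is not justified. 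The paper's closing remark says exactly this. The paper circumvents the problem by first solving the \emph{additive} equation $du-\div\gamma(\nabla u)\,dt \ni G\,dW$ (so the stochastic integral is fixed while $\lambda\to 0$), and then running a Banach fixed-point argument on the map $v\mapsto$ (solution with $G=B(\cdot,v)$) in a weighted space $L^2([0,T],e^{-\alpha t}dt;L^2_{\omega,x})$.

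A second point you pass over too quickly is the It\^o formula for the limit and for differences of solutions. The limit processes satisfy only $u\in L^1_{t,\omega}W^{1,1}_0$ and $\eta\in L^1_{t,\omega,x}$, so neither the variational It\^o formula on $H^1_0\subset L^2\subset H^{-1}$ nor any off-the-shelf version applies. The paper proves this as a separate lemma by smoothing with $(I-\delta\Delta)^{-m}$, applying It\^o, and passing $\delta\to 0$ using a Vitali argument whose uniform integrability comes from $k(c\nabla y)+k^*(c\zeta)\in L^1$ and a Jensen inequality for sub-Markovian operators. Your parenthetical ``which requires care'' is right, but this is a substantial ingredient, and it is also what makes your uniqueness computation legitimate.

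Minor: $k_\lambda^{*}=k^{*}$ is false; one has $k_\lambda^{*}(y)=k^{*}(y)+\tfrac{\lambda}{2}|y|^2$. The inequality $k^{*}(\gamma_\lambda(\xi))\le k_\lambda^{*}(\gamma_\lambda(\xi))$ still gives the bound you need, and the paper instead uses $\gamma_\lambda(x)\in\gamma\bigl((I+\lambda\gamma)^{-1}x\bigr)$ to get $k\bigl((I+\lambda\gamma)^{-1}x\bigr)+k^{*}(\gamma_\lambda(x))\le \gamma_\lambda(x)\cdot x$ directly.
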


Under the extra assumption of $\gamma$ being single-valued, the
solution obtained in~\cite{cm:div} is more regular in the sense that
$\E \sup_{t\leq T} \norm{u(t)}^2_{L^2(D)}$ is finite, the solution
map is Lipschitz-continuous from $L^2(\Omega;L^2(D))$ to
$L^2(\Omega;L^\infty(0,T;L^2(D)))$, and $u(\omega,\cdot)$ is weakly
continuous as a function on $[0,T]$ with values in $L^2(D)$ for
$\P$-a.a. $\omega \in \Omega$.

\medskip

\noindent\textbf{Acknowledgements.}  The authors are partially supported by The
Royal Society through its International Exchange Scheme. Parts of this
chapter were written while the first-named author was visiting the
Interdisziplin\"ares Zentrum f\"ur Komplexe Systeme at the University
of Bonn, hosted by Prof.~S.~Albeverio.

\section{Well-posedness of an auxiliary equation}
\label{sec:add}
The goal of this section is to prove well-posedness of a version of
\eqref{eq:0} with additive noise. Namely, we consider the initial
value problem
\begin{equation}
\label{eq:add}
du(t) - \div \gamma(\nabla u(t))\,dt \ni G(t)\,dW(t),
\qquad u(0)=u_0,
\end{equation}
where $G \in L^2(\Omega \times [0,T];\cL^2(H,L^2(D)))$ is a measurable and
adapted process.
\begin{proposition}   \label{prop:add}
  Equation \eqref{eq:add} admits a unique strong solution $(u,\eta)$
  satisfying the same integrability and weak continuity conditions of
  Theorem~\ref{thm:main}.
\end{proposition}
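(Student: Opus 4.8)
The plan is to obtain the solution of \eqref{eq:add} as the limit of solutions of regularized equations in which the singular graph $\gamma$ is replaced by its Yosida approximation. Precisely, for $\lambda>0$ let $\gamma_\lambda$ be the Yosida regularization of $\gamma$ (equivalently, $\gamma_\lambda=\nabla k_\lambda$ with $k_\lambda$ the Moreau--Yosida regularization of $k$), which is Lipschitz-continuous, monotone, and satisfies $\gamma_\lambda(0)=0$. The operator $u\mapsto-\div\gamma_\lambda(\nabla u)$ is then a well-defined, monotone, coercive, hemicontinuous operator from $W^{1,2}_0(D)$ to its dual, so by the classical variational theory of monotone SPDEs (Pardoux, Krylov--Rozovskii) the equation
\begin{equation}
\label{eq:add-lambda}
du_\lambda(t) - \div\gamma_\lambda(\nabla u_\lambda(t))\,dt = G(t)\,dW(t),
\qquad u_\lambda(0)=u_0,
\end{equation}
admits a unique strong solution $u_\lambda$ with $u_\lambda\in L^2(\Omega;C([0,T];L^2(D)))\cap L^2(\Omega\times(0,T);W^{1,2}_0(D))$.

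Next I would derive a priori estimates uniform in $\lambda$. Applying It\^o's formula to $\norm{u_\lambda(t)}_{L^2(D)}^2$ and using $\gamma_\lambda(0)=0$ together with the monotonicity of $\gamma_\lambda$ gives, after taking expectations and using the Hilbert--Schmidt bound on $G$,
\begin{equation}
\label{eq:est1}
\sup_{t\le T}\E\norm{u_\lambda(t)}_{L^2(D)}^2
+ \E\int_0^T\!\!\int_D \gamma_\lambda(\nabla u_\lambda)\cdot\nabla u_\lambda
\le \norm{u_0}_{L^2(\Omega;L^2(D))}^2 + \E\int_0^T\norm{G(t)}_{\cL^2(H,L^2(D))}^2\,dt.
\end{equation}
The crucial point is to convert the bound on $\int\gamma_\lambda(\nabla u_\lambda)\cdot\nabla u_\lambda$ into bounds on $k(\nabla u_\lambda)$, on $k^*(\gamma_\lambda(\nabla u_\lambda))$, and — via the superlinearity assumption $k(x)/|x|\to\infty$ — into equi-integrability of $\nabla u_\lambda$ in $L^1(\Omega\times(0,T)\times D)$; here one uses the Fenchel identity $\gamma_\lambda(\xi)\cdot\xi=k_\lambda(\xi)+k^*(\gamma_\lambda(\xi))$, the inequalities $k_\lambda\ge k\circ(I+\lambda\gamma)^{-1}$-type estimates, and the second assumption $\limsup k(-x)/k(x)<\infty$ to control the negative part and close the argument, obtaining $\eta_\lambda:=\gamma_\lambda(\nabla u_\lambda)$ bounded in $L^1$ with $k^*(\eta_\lambda)$ bounded in $L^1$ as well. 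From the equation itself one then also bounds $\int_0^\cdot\div\eta_\lambda\,ds$ in a suitable negative-order Sobolev space, so the family $u_\lambda$ is tight in an appropriate sense.

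Finally I would pass to the limit. By the de la Vall\'ee-Poussin criterion the $L^1$-bounds give weak compactness in $L^1$; by reflexivity and the bounds on $k$, $k^*$ one extracts (along a subsequence) $u_\lambda\rightharpoonup u$, $\nabla u_\lambda\rightharpoonup \nabla u$ weakly in $L^1$, $\eta_\lambda\rightharpoonup\eta$ weakly in $L^1$, with $u$ weakly continuous into $L^2(\Omega;L^2(D))$ by the uniform bound \eqref{eq:est1} and lower semicontinuity. Passing to the limit in the linear terms of \eqref{eq:add-lambda} is routine; the stochastic integral passes to the limit because $G$ does not depend on $u_\lambda$. The identification $\eta\in\gamma(\nabla u)$ a.e.\ is the main obstacle: since everything converges only weakly in $L^1$, I cannot use a plain monotonicity (Minty) trick directly. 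The plan is to use the convex-analytic characterization $\eta\in\gamma(\nabla u)\iff \int k(\nabla u)+\int k^*(\eta)\le\int \eta\cdot\nabla u$, proving the inequality by combining weak lower semicontinuity of the convex integrands $\int k(\cdot)$ and $\int k^*(\cdot)$ with an It\^o-formula/energy identity for the limit equation that yields $\limsup_\lambda \E\int_0^T\!\!\int_D \eta_\lambda\cdot\nabla u_\lambda \le \E\int_0^T\!\!\int_D \eta\cdot\nabla u$; the energy balance for the limit is obtained by testing \eqref{eq:0i} against $u$ (justified because $\div\eta\in L^2$) and using It\^o again. Uniqueness and the Lipschitz dependence on $u_0$ follow by taking two solutions, subtracting, applying It\^o to the $L^2(D)$-norm of the difference, and using that $\gamma$ is monotone so the drift contribution has a favorable sign, leaving a Gr\"onwall argument on $\E\norm{u^1(t)-u^2(t)}_{L^2(D)}^2$.
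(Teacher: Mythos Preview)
Your overall strategy---Yosida regularization, a priori estimates via It\^o, weak $L^1$ compactness via de la Vall\'ee Poussin, and identification of $\eta\in\gamma(\nabla u)$ by comparing energy identities---matches the paper's. Two steps, however, are not justified as written.

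First, your regularized equation is not obviously well-posed in $L^2(\Omega\times(0,T);H^1_0(D))$: the operator $u\mapsto-\div\gamma_\lambda(\nabla u)$ is monotone and hemicontinuous from $H^1_0$ to $H^{-1}$, but it is \emph{not} coercive in the $H^1_0$-norm unless $k$ has at least quadratic growth at infinity, which is not assumed (only superlinearity is). The paper fixes this by adding a vanishing viscosity $-\lambda\Delta u_\lambda$ to the drift, which restores coercivity and makes the Krylov--Rozovski\u{\i}/Pardoux theory applicable; the extra term disappears in the limit because $\lambda\|\nabla u_\lambda\|^2_{L^2_{t,\omega,x}}$ stays bounded.

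Second, and more substantively, your justification of the energy balance for the limit equation---``testing against $u$, justified because $\div\eta\in L^2$''---is incorrect. One only has $\eta\in L^1_{t,\omega,x}$, so $\div\eta$ is merely a distribution; the comparison argument after Definition~\ref{def:strong} shows that the \emph{time integral} $\int_0^t\div\eta(s)\,ds$ takes values in $L^2(D)$, which is not enough to invoke It\^o's formula in any standard Gelfand-triple setting. This is precisely the technical heart of the proof, and the paper devotes a separate lemma (Lemma~\ref{lm:ito}) to it: one mollifies the equation by $(I-\delta\Delta)^{-m}$ with $m$ large enough that everything lands in $H^1_0\cap W^{1,\infty}$, applies It\^o there, and then passes to the limit $\delta\to0$ using Vitali's theorem together with a Jensen-type inequality for the sub-Markovian operator $(I-\delta\Delta)^{-m}$ applied to $k(c\nabla y)$ and $k^*(c\zeta)$. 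The same lemma is what makes uniqueness work: your ``apply It\^o to the $L^2$-norm of the difference'' runs into exactly the same obstruction and needs the same cure.
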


We introduce the regularized equation
\[
du_\lambda(t) - \div \gamma_\lambda (\nabla u_\lambda(t))\,dt -
\lambda\Delta u_\lambda(t)\,dt = G(t)\,dW(t), \qquad
u_\lambda(0)=u_0,
\]
where $\gamma_\lambda: \erre^n \to \erre^n$, $\gamma_\lambda :=
\frac{1}{\lambda} (I - (I+\lambda\gamma)^{-1})$, for any $\lambda>0$,
is the Yosida approximation of $\gamma$, and $\Delta:H^1_0(D) \to
H^{-1}(D)$ is the (variational) Dirichlet Laplacian.
Since $\gamma_\lambda$ is monotone and Lipschitz-continuous, the
classical variational approach (see~\cite{KR-spde,Pard} as well as
\cite{LiuRo}) yields the existence of a unique predictable process
$u_\lambda$ with values in $H^1_0(D)$ such that
\[
\E \norm{u_\lambda}^2_{C([0,T];L^2(D))} 
+ \E\int_0^T \norm{u_\lambda(t)}^2_{H^1_0(D)}\,dt < \infty
\]
and
\begin{equation}
  \label{eq:reg}
  u_\lambda(t) - \int_0^t \div\gamma_\lambda(\nabla u_\lambda(s))\,ds
  - \lambda\int_0^t \Delta u_\lambda(s)\,ds = u_0 + \int_0^t G(s)\,dW(s)  
\end{equation}
$\P$-a.s. in $H^{-1}(D)$ for all $t \in [0,T]$.

We are now going to prove a priori estimates and weak compactness in
suitable topologies for $u_\lambda$ and related processes. These will
allow us to pass to the limit as $\lambda \to 0$ in
\eqref{eq:reg}. 

For notational parsimony, we shall often write, for any $p \geq 0$,
$L^p_\omega$, $L^p_t$, and $L^p_x$ in place of $L^p(\Omega)$,
$L^p(0,T)$, and $L^p(D)$, respectively, and $C_t$ to denote
$C([0,T])$. Other similar abbreviations are self-explanatory. The
$L^2(D)$-norm will be denoted simply by $\norm{\cdot}$. If a function
$f:D \to \erre^n$ is such that each component $f^j$, $j=1,\ldots,n$,
belongs to $L^p(D)$, we shall just write $f \in L^p(D)$ rather than $f
\in L^p(D)^n$. The notation $a \lesssim b$ means that $a \leq Nb$ for
a positive constant $N$.

\begin{lemma}  \label{lm:aspetta}
  There exists a constant $N$ such that
  \begin{align*}
    &\norm{u_\lambda}_{L^2_\omega C_t L^2_x} + 
    \lambda^{1/2} \norm{\nabla u_\lambda}_{L^2_{t,\omega,x}}
    + \norm{\gamma_\lambda(\nabla u_\lambda)%
      \cdot \nabla u_\lambda}_{L^1_{t,\omega,x,}}\\
    &\hspace{3em} < N \bigl( \norm{u_0}_{L^2_{\omega,x}} 
      + \norm{G}_{L^2_{t,\omega} \cL^2(H,L^2_x)} \bigr).
  \end{align*}
\end{lemma}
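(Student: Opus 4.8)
The plan is to carry out the classical $L^2$ energy estimate on the regularized equation \eqref{eq:reg}. Since the variational solution satisfies $u_\lambda \in L^2_\omega C_t L^2_x$ and $u_\lambda\in L^2_{t,\omega}H^1_0$ (so that $\nabla u_\lambda\in L^2_{t,\omega,x}$), while $\div\gamma_\lambda(\nabla u_\lambda)+\lambda\Delta u_\lambda\in L^2_{t,\omega}H^{-1}$ because $\gamma_\lambda$ is Lipschitz and $\Delta\colon H^1_0\to H^{-1}$ is bounded, Itô's formula for the square of the $L^2_x$-norm in the Gelfand triple $H^1_0\hookrightarrow L^2_x\hookrightarrow H^{-1}$ is available (see e.g.~\cite{Pard,KR-spde,LiuRo}) and gives, $\P$-a.s. for every $t\in[0,T]$,
\begin{align*}
\tfrac12\norm{u_\lambda(t)}^2
&= \tfrac12\norm{u_0}^2
+ \int_0^t \bigl\langle \div\gamma_\lambda(\nabla u_\lambda(s))+\lambda\Delta u_\lambda(s),\,u_\lambda(s)\bigr\rangle\,ds \\
&\quad + M_\lambda(t) + \tfrac12\int_0^t \norm{G(s)}^2_{\cL^2(H,L^2_x)}\,ds,
\end{align*}
where $\langle\cdot,\cdot\rangle$ is the $(H^{-1},H^1_0)$ duality pairing and $M_\lambda(t):=\int_0^t\bigl(u_\lambda(s),G(s)\,dW(s)\bigr)$ is the real-valued stochastic integral obtained by pairing with $u_\lambda$. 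Integrating by parts in the space variable and using $u_\lambda(s)\in H^1_0(D)$, the pairing equals $-\int_D\gamma_\lambda(\nabla u_\lambda(s))\cdot\nabla u_\lambda(s)\,dx-\lambda\norm{\nabla u_\lambda(s)}^2$, whence
\begin{align*}
&\tfrac12\norm{u_\lambda(t)}^2
+ \int_0^t\!\!\int_D \gamma_\lambda(\nabla u_\lambda)\cdot\nabla u_\lambda\,dx\,ds
+ \lambda\int_0^t\norm{\nabla u_\lambda(s)}^2\,ds \\
&\hspace{3em}{}= \tfrac12\norm{u_0}^2 + M_\lambda(t) + \tfrac12\int_0^t\norm{G(s)}^2_{\cL^2(H,L^2_x)}\,ds .
\end{align*}

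Next I would exploit the sign of the left-hand side. Since $k\geq0$ and $k(0)=0$, the origin minimizes $k$, so $0\in\partial k(0)=\gamma(0)$; this forces $(I+\lambda\gamma)^{-1}(0)=0$, hence $\gamma_\lambda(0)=0$, and monotonicity of $\gamma_\lambda$ then gives $\gamma_\lambda(\nabla u_\lambda)\cdot\nabla u_\lambda\geq0$ pointwise. Thus all three terms on the left are nonnegative, the last two being nondecreasing in $t$ and vanishing at $t=0$. Taking $\sup_{t\leq T}$ and then expectations, the deterministic terms are bounded by $\norm{u_0}^2_{L^2_{\omega,x}}$ and $\norm{G}^2_{L^2_{t,\omega}\cL^2(H,L^2_x)}$, while the Burkholder--Davis--Gundy inequality, followed by Cauchy--Schwarz in $\omega$ and Young's inequality, yields for any $\varepsilon>0$
\begin{align*}
\E\sup_{t\leq T}\lvert M_\lambda(t)\rvert
&\lesssim \E\Bigl(\int_0^T\norm{u_\lambda(s)}^2\norm{G(s)}^2_{\cL^2(H,L^2_x)}\,ds\Bigr)^{1/2}
\leq \E\Bigl[\Bigl(\sup_{s\leq T}\norm{u_\lambda(s)}\Bigr)\norm{G}_{L^2_t\cL^2(H,L^2_x)}\Bigr] \\
&\leq \varepsilon\,\E\sup_{s\leq T}\norm{u_\lambda(s)}^2 + N_\varepsilon\,\norm{G}^2_{L^2_{t,\omega}\cL^2(H,L^2_x)} .
\end{align*}
Choosing $\varepsilon$ small enough, the term $\varepsilon\,\E\sup_s\norm{u_\lambda(s)}^2$ is absorbed into the left-hand side; this is legitimate because $\E\sup_t\norm{u_\lambda(t)}^2<\infty$ by the variational theory (alternatively, one first localizes with $\tau_R:=\inf\{t:\norm{u_\lambda(t)}>R\}$ and lets $R\to\infty$ by monotone convergence). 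Rearranging produces the asserted bound, with a constant $N$ independent of $\lambda$.

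I expect the delicate points to be not the computation itself but the justification of Itô's formula in the variational framework (which is standard) and, above all, the absorption of the $\E\sup_t\norm{u_\lambda(t)}^2$ generated by Burkholder--Davis--Gundy, which rests on the a priori finiteness of that quantity (or on the stopping-time localization above). It is worth stressing that the only $\lambda$-dependent term, $\lambda\int_0^t\norm{\nabla u_\lambda(s)}^2\,ds$, carries the favorable sign in the energy identity, so that no constant in the final estimate depends on $\lambda$; this uniformity is precisely what will be needed to pass to the limit $\lambda\to0$ in \eqref{eq:reg}. The genuinely harder part of the overall program lies beyond this lemma: upgrading the control of $\int\gamma_\lambda(\nabla u_\lambda)\cdot\nabla u_\lambda$ to a $\lambda$-uniform bound on $\nabla u_\lambda$, $k(\nabla u_\lambda)$ and $k^*(\gamma_\lambda(\nabla u_\lambda))$ in $L^1$ will require the superlinear growth of $k$ together with a de la Vall\'ee-Poussin type uniform-integrability argument.
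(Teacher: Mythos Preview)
Your proof is correct and follows essentially the same approach as the paper: apply It\^o's formula for the squared $L^2_x$-norm, take the supremum in time and expectation, control the martingale term via the Burkholder--Davis--Gundy (Davis) inequality and the ideal property of Hilbert--Schmidt operators, and absorb the resulting $\varepsilon\,\E\sup_t\norm{u_\lambda(t)}^2$ into the left-hand side. Your additional justifications (applicability of It\^o's formula in the variational setting, nonnegativity of $\gamma_\lambda(\nabla u_\lambda)\cdot\nabla u_\lambda$, and the localization alternative for the absorption step) are all sound and simply make explicit what the paper leaves implicit.
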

\begin{proof}
  It\^o's formula for the square of the norm in $L^2_x$ yields
  \begin{align*}
  &\norm{u_\lambda(t)}^2 
  + 2\int_0^t\!\!\int_D \gamma(\nabla u_\lambda(s)) \cdot 
    \nabla u_\lambda(s)\,dx\,ds
  + 2\lambda \int_0^t \norm{\nabla u_\lambda(s)}^2\,ds\\
  &\hspace{3em} = \norm{u_0}^2
  + 2\int_0^t u_\lambda(s) G(s)\,dW(s) 
  + \int_0^t \norm{G(s)}^2_{\cL^2(H,L^2_x)}\,ds,
  \end{align*}
  hence, taking the supremum in time and expectation,
  \begin{align*}
  &\E\norm{u_\lambda}^2_{C_t L^2_x} 
  + \E\int_0^T\!\!\int_D \gamma_\lambda(\nabla u_\lambda(s))%
     \cdot \nabla u_\lambda(s)\,dx\,ds
  + \lambda\E\norm{\nabla u_\lambda}^2_{L^2_{t,x}}\\
  &\hspace{5em} \lesssim
  \E\norm{u_0}^2 + \E\norm{G}^2_{L^2_t \cL^2(H,L^2_x)}
  + \E\sup_{t\in[0,T]} \left|{\int_0^t u_\lambda(s) G(s)\,dW(s)}\right|,
  \end{align*}
  where, by Davis' inequality (see,~e.g.,~\cite{cm:Expo16}), the ideal
  property of Hilbert-Schmidt operators
  (see,~e.g.,~\cite[p.~V.52]{Bbk:EVT}), and the elementary inequality
  $ab \leq \varepsilon a^2 + b^2/\varepsilon$ $\forall a,b\geq 0$,
  $\varepsilon>0$,
  \begin{align*}
  \E\sup_{t\in[0,T]} \left\vert 
      \int_0^t u_\lambda(s) G(s)\,dW(s) \right\vert &\lesssim
  \E\biggl( \int_0^T \norm{u_\lambda(s)G(s)}^2_{\cL^2(H,\erre)}\,ds\biggr)^{1/2}\\
  &\leq \varepsilon \E \norm{u_\lambda}^2_{C_t L^2_x}
  + N(\varepsilon) \E\int_0^T \norm{G(s)}^2_{\cL^2(H,L^2_x)}\,ds
  \end{align*}
  for any $\varepsilon>0$. To conclude it suffices to choose
  $\varepsilon$ small enough.
\end{proof}

\begin{lemma}
  \label{lm:L1a}
  The families $(\nabla u_\lambda)$ and
  $(\gamma_\lambda(\nabla u_\lambda))$ are relatively weakly compact
  in $L^1(\Omega \times (0,T) \times D)$.
\end{lemma}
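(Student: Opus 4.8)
The plan is to invoke the Dunford--Pettis theorem: since $\Omega\times(0,T)\times D$ is a finite measure space, it suffices to prove that both families are bounded and uniformly integrable in $L^1$, and for the latter I would use a de la Vallée-Poussin type criterion, i.e.\ exhibit for each family a nondecreasing function $\Phi\colon\erre_+\to\erre_+$ with $\Phi(r)/r\to\infty$ as $r\to\infty$ such that the expected space-time integral of $\Phi$ composed with the Euclidean norm of the generic element is bounded uniformly in $\lambda$ (the $L^1$-bound is then automatic, since $\Phi(r)\geq r$ for $r$ large). The only a priori estimate at hand is the bound on $\gamma_\lambda(\nabla u_\lambda)\cdot\nabla u_\lambda$ in $L^1_{t,\omega,x}$ from Lemma~\ref{lm:aspetta}; note that this integrand is nonnegative, as $\gamma_\lambda$ is monotone with $\gamma_\lambda(0)=0$ (because $k$ attains its minimum at the origin, so $0\in\gamma(0)$). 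Everything rests on one pointwise identity: writing $J_\lambda:=(I+\lambda\gamma)^{-1}$ for the resolvent, so that $J_\lambda x=x-\lambda\gamma_\lambda(x)$ and $\gamma_\lambda(x)\in\gamma(J_\lambda x)=\partial k(J_\lambda x)$, the Fenchel--Young equality gives
\[
\gamma_\lambda(x)\cdot x = k(J_\lambda x) + k^*(\gamma_\lambda(x)) + \lambda\lvert\gamma_\lambda(x)\rvert^2 .
\]

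For $(\gamma_\lambda(\nabla u_\lambda))$, dropping the nonnegative terms $k(J_\lambda x)$ and $\lambda\lvert\gamma_\lambda(x)\rvert^2$ gives $k^*(\gamma_\lambda(x))\leq\gamma_\lambda(x)\cdot x$, whence $\sup_\lambda\E\int_0^T\!\!\int_D k^*(\gamma_\lambda(\nabla u_\lambda))\,dx\,dt<\infty$ by Lemma~\ref{lm:aspetta}. Since $k$ is finite on all of $\erre^n$, its conjugate $k^*$ is $1$-coercive, i.e.\ $k^*(y)/\lvert y\rvert\to\infty$ as $\lvert y\rvert\to\infty$; because this coercivity is uniform over directions, $\Phi(r):=\inf_{\lvert y\rvert\geq r}k^*(y)$ is nondecreasing, satisfies $\Phi(r)/r\to\infty$, and $k^*(y)\geq\Phi(\lvert y\rvert)$ for every $y$. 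The criterion above then yields the weak relative compactness of $(\gamma_\lambda(\nabla u_\lambda))$ in $L^1$.

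For $(\nabla u_\lambda)$ I would instead use the decomposition $\nabla u_\lambda=J_\lambda(\nabla u_\lambda)+\lambda\gamma_\lambda(\nabla u_\lambda)$ and treat the two summands separately. Dropping the nonnegative term $k^*(\gamma_\lambda(x))$ in the identity above gives $\gamma_\lambda(x)\cdot x\geq k(J_\lambda x)+\lambda\lvert\gamma_\lambda(x)\rvert^2$, so Lemma~\ref{lm:aspetta} provides both $\sup_\lambda\E\int_0^T\!\!\int_D k(J_\lambda\nabla u_\lambda)\,dx\,dt<\infty$ and $\lambda\,\E\int_0^T\!\!\int_D\lvert\gamma_\lambda(\nabla u_\lambda)\rvert^2\,dx\,dt$ bounded uniformly in $\lambda$. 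The first bound makes $(J_\lambda\nabla u_\lambda)$ uniformly integrable, because $k$ is superlinear at infinity by assumption~(a) (use $\Phi(r):=\inf_{\lvert x\rvert\geq r}k(x)$). The second gives $\norm{\lambda\gamma_\lambda(\nabla u_\lambda)}_{L^2_{t,\omega,x}}^2=\lambda^2\,\E\int_0^T\!\!\int_D\lvert\gamma_\lambda(\nabla u_\lambda)\rvert^2\,dx\,dt\lesssim\lambda$, hence $\lambda\gamma_\lambda(\nabla u_\lambda)\to 0$ in $L^2_{t,\omega,x}$ and, the measure space being finite, in $L^1_{t,\omega,x}$ as well, so (restricting, as we may, to $\lambda\leq 1$, which is all that matters for the subsequent passage to the limit $\lambda\to0$) this family is trivially uniformly integrable and $L^1$-bounded. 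A sum of two uniformly integrable, $L^1$-bounded families being of the same kind, $(\nabla u_\lambda)$ is weakly relatively compact in $L^1$ by Dunford--Pettis.

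The estimate-type work is already packaged in Lemma~\ref{lm:aspetta}; what requires some care here is the convex-analytic bookkeeping: the identities $J_\lambda x=x-\lambda\gamma_\lambda(x)$ and $\gamma_\lambda(x)\in\partial k(J_\lambda x)$, the Fenchel--Young equality, the implication that $\mathrm{dom}\,k=\erre^n$ forces $k^*$ to be $1$-coercive, and the remark that $1$-coercivity in $\erre^n$ is automatically uniform over directions (which is what makes $r\mapsto\inf_{\lvert\cdot\rvert\geq r}(\cdot)$ superlinear). I expect this to be the only genuine obstacle; in particular, no property of $\gamma$ beyond assumption~(a) enters, and the growth condition on $k(-x)/k(x)$ plays no role in this lemma.
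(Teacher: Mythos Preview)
Your proof is correct and follows essentially the same route as the paper: the Fenchel--Young identity for $\gamma_\lambda$, de la Vall\'ee Poussin with $k^*$ for $(\gamma_\lambda(\nabla u_\lambda))$ and with $k$ for $(J_\lambda\nabla u_\lambda)$, then the resolvent decomposition $\nabla u_\lambda=J_\lambda\nabla u_\lambda+\lambda\gamma_\lambda(\nabla u_\lambda)$. The only cosmetic difference is your handling of the summand $\lambda\gamma_\lambda(\nabla u_\lambda)$ via the $L^2$ bound (giving strong convergence to $0$) rather than, as the paper does, via the uniform integrability of $(\gamma_\lambda(\nabla u_\lambda))$ already established.
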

\begin{proof}
  Recall that, for any $y$, $r \in \erre^n$, ones has
  $k(y)+k^*(r)=r\cdot y$ if and only if $r \in \partial
  k(y)=\gamma(y)$. Therefore, since
  \[
  \gamma_\lambda(x) \in \partial k\bigl((I+\lambda\gamma)^{-1}x\bigr)
  = \gamma\bigl((I+\lambda\gamma)^{-1}x\bigr) \qquad \forall x \in \erre^n,
  \]
  we deduce by the definition of $\gamma_\lambda$ that
  \begin{align}
  \nonumber
  k\bigl((I+\lambda\gamma)^{-1}x\bigr) + k^*\bigl(\gamma_\lambda(x)\bigr)
  &= \gamma_\lambda(x)\cdot(I+\lambda\gamma)^{-1}x \\
  \label{eq:pluto}
  &=\gamma_\lambda(x)\cdot x-\lambda\left|{\gamma_\lambda(x)}\right|^2
  \leq \gamma_\lambda(x)\cdot x
  \qquad \forall x \in \erre^n\,.
  \end{align}
  (See,~e.g.,~\cite{lema} for all necessary facts from convex analysis
  used in this note.) Hence, taking Lemma~\ref{lm:aspetta} into account,
  there exists a constant $N>0$, independent of $\lambda$, such that
  \[
  \E\int_0^T\!\!\int_D k^*\bigl(\gamma_\lambda(\nabla u_\lambda)\bigr)
  \leq \E\int_0^T\!\!\int_D \gamma_\lambda(\nabla u_\lambda)\cdot \nabla u_\lambda
  < N.
  \]
  The assumptions on $k$ imply that its convex conjugate $k^*$ is also
  convex, lower semicontinuous and such that $\lim_{\lvert y \rvert
    \to \infty} k^*(y)/\lvert y \rvert = +\infty$. Therefore a simple
  modification of the criterion by de la Vall\'ee Poussin implies that
  $(\gamma_\lambda(\nabla u_\lambda))$ is uniformly integrable on
  $\Omega \times (0,T) \times D$, hence that it is relatively weakly
  compact in $L^1_{t,\omega,x}$ by the Dunford-Pettis
  theorem. A completely analogous argument shows that
  \begin{align*}
  \E\int_0^T\!\!\int_D k\bigl((I+\lambda\gamma)^{-1}\nabla u_\lambda\bigr)
  \leq \E\int_0^T\!\!\int_D \gamma_\lambda(\nabla u_\lambda) \cdot%
    \nabla u_\lambda < N,
  \end{align*}
  hence that $(I+\lambda\gamma)^{-1}\nabla u_\lambda$ is relatively
  weakly compact in $L^1_{t,\omega,x}$. Moreover,
  since
  $\nabla u_\lambda=(I+\lambda\gamma)^{-1}\nabla
  u_\lambda+\lambda\gamma_\lambda(\nabla u_\lambda)$, it also follows
  that $(\nabla u_\lambda)$ is relatively weakly compact in
  $L^1_{t,\omega,x}$.
\end{proof}

Thanks to Lemmata \ref{lm:aspetta} and \ref{lm:L1a}, there exists a
subsequence of $\lambda$, denoted by the same symbol, and processes $u
\in L^\infty_t L^2_{\omega,x} \cap L^1_{t,\omega} W^{1,1}_0$ and $\eta
\in L^1_{t,\omega,x}$ such that
\begin{align*}
    u_\lambda &\longrightarrow u & &\text{weakly* in } L^\infty_t L^2_{\omega,x},\\
    u_\lambda &\longrightarrow u & &\text{weakly in } L^1_{t,\omega} W^{1,1}_0,\\
    \gamma_\lambda(\nabla u_\lambda) &\longrightarrow \eta
    & &\text{weakly in } L^1_{t,\omega,x},\\
    \lambda u_\lambda &\longrightarrow 0 & &\text{weakly in } L^2_{t,\omega}H^1_0.
\end{align*}
as $\lambda \to 0$.
Let $t \in [0,T]$ be arbitrary but fixed. The fourth convergence above
implies
\[
  \lambda\int_0^t\Delta u_\lambda(s)\,ds \longrightarrow 0
  \qquad \text{ in } L^2_\omega H^{-1},
\]
while the third yields, for any $\varphi \in L^\infty_\omega W^{1,\infty}$,
\[
\E\int_0^t\!\!\int_D \gamma_\lambda(\nabla u_\lambda(s)) \cdot%
   \nabla\varphi\,dx\,ds \longrightarrow
\E\int_0^t\!\!\int_D\eta(s) \cdot \nabla\varphi\,dx\,ds,
\]
hence
$\displaystyle \E\int_0^t \langle \div\gamma_\lambda(\nabla
u_\lambda(s)), \varphi\rangle\,ds \longrightarrow \E\int_0^t\langle
\div\eta(s), \varphi\rangle \,ds$. Therefore, recalling
\eqref{eq:reg}, by difference we deduce that
\[
\E \langle u_\lambda(t),\varphi \rangle \longrightarrow 
\E \langle u(t),\varphi \rangle.
\]
Consequently, since $u_\lambda(t)$ is bounded in $L^2_\omega L^2_x$,
we also have that $u_\lambda(t)\rightarrow u(t)$ weakly in
$L^2_\omega L^2_x$.  Taking the limit as $\lambda \to 0$ in
\eqref{eq:reg} thus yields
\[
u(t) - \int_0^t\div\eta(s)\,ds
= u_0 + \int_0^t G(s)\,dW(s) \qquad\text{in } L^1_\omega V_0',
\]
where $V_0'$ is the (topological) dual of a separable Hilbert space
$V_0$ embedded continuously and densely in $H^1_0$, and continuously
in $W^{1,\infty}$.  The identity immediately implies that
$u \in C_t L^1_\omega V_0'$. Since
$u \in L^\infty_t L^2_\omega L^2_x$, it follows by a result of Strauss
(see~\cite[Theorem~2.1]{Strauss}) that $u$ is a weakly continuous
function on $[0,T]$ with values in $L^2_\omega L^2_x$.

By Mazur's lemma there exist sequences of convex combinations of
$\gamma_\lambda(\nabla u_\lambda)$ that converge $\eta$ in (the norm
topology of) $L^1_{t,\omega,x}$, thus also, passing to a subsequence,
$\P \otimes dt$-almost everywhere in $L^1_x$. Similarly, since
$u_\lambda \to u$ weakly* in $L^\infty_tL^2_{\omega,x}$ implies that
$u_\lambda \to u$ weakly in $L^2_{t,\omega,x}$, there exist sequences
of convex combinations of $u_\lambda$ that converge to $u$ $\P \otimes
dt$-almost everywhere in $L^2_x$. Since convex combinations of
$(u_\lambda)$ and of $(\gamma_\lambda(\nabla u_\lambda))$ are (at
least) predictable and adapted, respectively, it follows that $u$ is
predictable and $\eta$ is measurable and adapted.
Moreover, thanks to the weak lower semicontinuity of convex
integrals, one has
\[
\E\int_0^T\!\!\int_D \bigl(k(\nabla u)+k^*(\eta)\bigr) < \infty.
\]
In order to show that $\eta \in \gamma(\nabla u)$ for
a.a. $(\omega,t,x)$, we need the following ``energy identity''.
\begin{lemma}\label{lm:ito}
  Assume that
  \[
  y(t) + \alpha \int_0^t y(s)\,ds - \int_0^t \div \zeta(s)\,ds =
  y_0 + \int_0^t C(s)\,dW(s)
  \]
  in $L^2_x$ $\P$-a.s. for all $t \in [0,T]$, where
  $\alpha \in \erre$, $y_0 \in L^2_{\omega,x}$ is $\cF_0$-measurable, and
  \[
  y \in L^\infty_tL^2_{\omega,x}\cap L^1_{t,\omega}W^{1,1}_0, \qquad
  \zeta\in L^1_{t,\omega,x}, \qquad C \in L^2_{t,\omega}\cL^2(H,L^2_x)
  \]
  are measurable and adapted processes such that
  $k(c\nabla y)+k^*(c\zeta)\in L^1_{t,\omega,x}$ for a constant
  $c>0$. Then
  \begin{align*}
  &\E\norm{y(t)}^2 + 2\alpha \E \int_0^t\norm{y(s)}^2\,ds
  + 2\E\int_0^t\!\!\int_D \zeta\cdot\nabla y\,dx\,ds\\
  &\hspace{3em}
  = \E\norm{y_0}^2 + \E \int_0^t \norm{C(s)}^2_{\cL^2(H,L^2_x)}\,ds
  \qquad \forall t \in [0,T].
  \end{align*}
\end{lemma}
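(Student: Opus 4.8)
\emph{Strategy.}
Writing $M:=\int_0^\cdot C\,dW$, the asserted identity is the energy balance for the equation $\partial_t y=\div\zeta-\alpha y+\partial_t M$, read in $V_0'$; the only delicate point is that $\zeta$ and $\nabla y$ are merely integrable, so the cross term $\int_D\zeta\cdot\nabla y$ is under control only through the Fenchel--Young inequality. As a first step I would therefore record that this term is well defined: $c^2\,\zeta\cdot\nabla y\le k(c\nabla y)+k^*(c\zeta)$ gives an $L^1_{t,\omega,x}$ upper bound, and, applying the same inequality to $-\zeta$, the standing assumption $\limsup_{|x|\to\infty}k(-x)/k(x)<\infty$ --- which, by the convex-analysis facts of \cite{lema}, bounds $k^*(-y)$ by $k^*(\lambda y)$ up to an additive constant --- together with the star-shapedness of $k$ and $k^*$ (so that $k(c\nabla y)+k^*(c\zeta)\in L^1_{t,\omega,x}$ persists for every smaller $c$) yields an $L^1_{t,\omega,x}$ lower bound as well; hence $|\zeta\cdot\nabla y|\in L^1_{t,\omega,x}$ and all the quantities in the statement are finite.

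\emph{Regularization and passage to the limit.}
On the Gelfand triple $V_0\hookrightarrow L^2_x\hookrightarrow V_0'$ introduced above, fix the Yosida-type smoothing $J_\epsilon:=(I+\epsilon\Lambda)^{-1}$, where $\Lambda$ is the positive self-adjoint operator on $L^2_x$ associated with the triple ($D(\Lambda^{1/2})=V_0$): $J_\epsilon$ is a self-adjoint contraction of $L^2_x$ with $J_\epsilon^{1/2}\to I$ strongly on $L^2_x$, it maps $V_0'$ into $V_0\hookrightarrow W^{1,\infty}(D)\cap H^1_0(D)$, and it commutes with the time integrals in the equation. Applying $J_\epsilon$ and expanding the quadratic $\langle J_\epsilon y(t),y(t)\rangle=\norm{J_\epsilon^{1/2}y(t)}^2$ by the ordinary product rule for the absolutely continuous (in $V_0'$) part of $y$ and the classical It\^o rule for the $L^2_x$-valued martingale $\int_0^\cdot J_\epsilon^{1/2}C\,dW$, one checks that, in expectation, the mixed martingale contributions cancel (by self-adjointness of $J_\epsilon$ and the martingale property), and obtains, after localizing along $\tau_k:=\inf\{t:\norm{y(t)}>k\}$ if necessary (since $\E\sup_t\norm{y(t)}^2$ is not assumed finite),
\begin{align*}
&\E\langle J_\epsilon y(t),y(t)\rangle
+2\alpha\,\E\int_0^t\langle J_\epsilon y(s),y(s)\rangle\,ds
+2\,\E\int_0^t\!\!\int_D\zeta(s)\cdot\nabla\bigl(J_\epsilon y(s)\bigr)\,dx\,ds\\
&\hspace{5em}=\E\langle J_\epsilon y_0,y_0\rangle
+\E\int_0^t\norm{J_\epsilon^{1/2}C(s)}^2_{\cL^2(H,L^2_x)}\,ds,
\qquad t\in[0,T],
\end{align*}
the integration by parts $\langle J_\epsilon y,\div\zeta\rangle=-\int_D\zeta\cdot\nabla J_\epsilon y$ being legitimate since $J_\epsilon y(s)$ is Lipschitz and vanishes on $\partial D$. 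Letting $\epsilon\to0$: strong convergence $J_\epsilon^{1/2}\to I$ on $L^2_x$ forces $\norm{J_\epsilon^{1/2}y(t)}^2\to\norm{y(t)}^2$ pointwise, dominated by $\norm{y(t)}^2$, so $\E\langle J_\epsilon y(t),y(t)\rangle\to\E\norm{y(t)}^2$ for \emph{every} $t$ (no separate continuity-in-$t$ argument needed); the $\alpha$-term and the noise term converge likewise by dominated convergence.

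\emph{The cross term, and the main obstacle.}
What remains is $\E\int_0^t\int_D\zeta\cdot\nabla(J_\epsilon y)\to\E\int_0^t\int_D\zeta\cdot\nabla y$. Using that $J_\epsilon$ is uniformly bounded on $W^{1,1}_0(D)$ and converges strongly to the identity there, one gets $\nabla J_\epsilon y\to\nabla y$ in $L^1_{t,\omega,x}$, hence $\zeta\cdot\nabla J_\epsilon y\to\zeta\cdot\nabla y$ a.e.\ along a subsequence; to pass to the limit under the integral one needs uniform integrability of $(\zeta\cdot\nabla J_\epsilon y)^{\pm}$, which, by Fenchel--Young, are dominated by $c^{-2}\bigl(k(c\nabla J_\epsilon y)+k^*(\pm c\zeta)\bigr)$. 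Since the terms $k^*(\pm c\zeta)$ are $\epsilon$-independent and integrable, it suffices to show $\E\int_0^t\int_D k(c\nabla J_\epsilon y)\to\E\int_0^t\int_D k(c\nabla y)$: weak lower semicontinuity of the convex integral gives ``$\liminf\ge$'', and for ``$\limsup\le$'' one wants a Jensen-type bound $k(c\nabla J_\epsilon y)\le\widetilde J_\epsilon\bigl(k(c\nabla y)\bigr)$ with $\widetilde J_\epsilon$ sub-Markovian and $\to I$ in $L^1$, which with Fatou's lemma gives the convergence, whence (Scheff\'e) $k(c\nabla J_\epsilon y)\to k(c\nabla y)$ in $L^1_{t,\omega,x}$, the required uniform integrability, and finally Vitali's theorem. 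I expect this last estimate to be the genuine obstacle: the integration by parts forces the regularization to respect the Dirichlet boundary condition, whereas the Jensen-compatibility with the (possibly anisotropic) convex functions $k,k^*$ would be immediate for a convolution-type smoothing but is delicate for one adapted to the boundary --- reconciling the two is the heart of the argument; everything else is routine.
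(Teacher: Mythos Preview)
Your overall architecture---regularize, apply It\^o, localize, pass to the limit, with Fenchel--Young controlling the cross term---is exactly the paper's, and you have correctly isolated the single genuine difficulty. The gap is that you leave it unresolved: with your regularizer $J_\epsilon=(I+\epsilon\Lambda)^{-1}$, where $\Lambda$ comes from the abstract space $V_0$, there is no reason to expect a sub-Markovian structure, hence no Jensen inequality $k(c\nabla J_\epsilon y)\le\widetilde J_\epsilon\bigl(k(c\nabla y)\bigr)$; nor is it obvious that $J_\epsilon$ is uniformly bounded on $W^{1,1}_0$ or converges strongly there. So as written the proposal is incomplete precisely at the point you flag.

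The paper's resolution is to take the regularizer to be $(I-\delta\Delta)^{-m}$, the $m$-th power of the resolvent of the \emph{Dirichlet Laplacian}, with $m$ large enough that it carries $L^1_x$ into $H^1_0\cap W^{1,\infty}$. This operator \emph{is} sub-Markovian (the paper cites \cite{Haa07} for the generalized Jensen inequality in that setting), and one gets directly
\[
k(c\nabla y^\delta)+k^*(c\zeta^\delta)\le(I-\delta\Delta)^{-m}\bigl(k(c\nabla y)+k^*(c\zeta)\bigr),
\]
the right-hand side converging in $L^1_{t,\omega,x}$ as $\delta\to0$; uniform integrability of $(\zeta^\delta\cdot\nabla y^\delta)$ follows and Vitali finishes. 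Two further differences worth noting: the paper applies the resolvent to the \emph{entire} equation (so $\zeta$, $C$, $y_0$ are mollified too, not only $y$), which is why the Jensen step is needed for both $k$ and $k^*$; and the bound on $-\zeta^\delta\cdot\nabla y^\delta$ uses the standing hypothesis $\limsup_{|x|\to\infty}k(-x)/k(x)<\infty$, as you anticipated. The upshot is that no delicate ``reconciliation'' is needed: the Dirichlet resolvent already preserves the boundary condition \emph{and} supports Jensen, and choosing it is the missing idea.
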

\begin{proof}
  Let $m\in\mathbb{N}$ be such that such that $(I-\delta\Delta)^{-m}$
  maps $L^1_x$ into $H^1_0\cap W^{1,\infty}$, and use the notation
  $h^\delta:=(I-\delta\Delta)^{-m}h$ for any $h$ taking values in
  $L^1_x$. One has
  \begin{equation}
  \label{eq:Ito-reg}
  y^\delta(t)+\alpha\int_0^ty^\delta(s)\,ds-\int_0^t\div\zeta^\delta(s)\,ds=
  y^\delta_0+\int_0^tC^\delta(s)\,dW(s)
  \end{equation}
  $\P$-a.s. for all $t\in[0,T]$, as an equality in $L^2_x$, for which
  It\^o's formula yields
  \begin{align*}
  &\norm{y^\delta(t)}^2 + 2\alpha \int_0^t \norm{y^\delta(s)}^2\,ds
  + 2\int_0^t\!\!\int_D \zeta^\delta \cdot \nabla y^\delta\,dx\,ds\\
  &\hspace{3em} = \norm{y^\delta_0}^2
  + \int_0^t \norm{C^\delta(s)}^2_{\cL^2(H,L^2_x)}\,ds
  + \int_0^t y^\delta(s) C^\delta(s)dW(s).
  \end{align*}
  It is evident from \eqref{eq:Ito-reg} that $y^\delta$ is a
  continuous $L^2_x$-valued process, hence the stochastic integral
  $(y^\delta C^\delta) \cdot W$ on the right-hand side of the above
  identity is a continuous local martingale. Let $(T_n)$ be a
  localizing sequence, and multiply the previous identity by
  $1_{[\!\![0,T_n]\!\!]}$, to obtain, thanks to $\E(y^\delta
  C^\delta) \cdot W(\cdot \wedge T_n)=0$,
  \begin{align*}
  &\E \norm{y^\delta(t \wedge T_n)}^2 
  + 2\alpha \E\int_0^{t\wedge T_n} \norm{y^\delta(s)}^2\,ds
  + 2\E\int_0^{t \wedge T_n}\!\!\int_D \zeta^\delta \cdot \nabla y^\delta\,dx\,ds\\
  &\hspace{3em} = \E\norm{y^\delta_0}^2
  + \E\int_0^{t \wedge T_n} \norm{C^\delta(s)}^2_{\cL^2(H,L^2_x)}\,ds.
  \end{align*}
  Letting $n$ tend to $\infty$, the dominated convergence theorem
  yields
  \begin{align*}
  &\E\norm{y^\delta(t)}^2
  + 2\alpha \E\int_0^t \norm{y^\delta(s)}^2\,ds
  + 2\E\int_0^t\!\!\int_D\zeta^\delta\cdot\nabla y^\delta\,dx\,ds\\
  &\hspace{3em} = \E\norm{y^\delta_0}^2
  + \E\int_0^t\norm{C^\delta(s)}^2_{\cL^2(H,L^2_x)}\,ds
  \end{align*}
  for all $t\in[0,T]$. We are now going to pass to the limit as
  $\delta \to 0$: the first and second terms on the left-hand side and
  the first on the right-hand side clearly converge to
  $\E\norm{y(t)}^2$, $2\alpha \E\int_0^t\norm{y(s)}^2\,ds$ and
  $\E\norm{y_0}^2$, respectively. Properties of Hilbert-Schmidt
  operators and the dominated convergence theorem also yield
  \[
  \lim_{\delta \to 0} \E\int_0^t \norm{C^{\delta}(s)}_{\cL^2(H,L^2_x)}^2\,ds
  = \E\int_0^t \norm{C(s)}_{\cL^2(H,L^2_x)}^2\,ds
  \]
  for all $t \in [0,T]$.
  To conclude it then suffices to show that $\nabla y^{\delta} \cdot
  \zeta^{\delta} \to \nabla y\cdot \zeta$ in $L^1_{t,\omega,x}$.
  Since $\nabla y^{\delta} \to \nabla y$ and $\zeta^{\delta} \to
  \zeta$ in measure in $\Omega \times (0,t) \times D$, Vitali's
  theorem implies strong convergence in $L^1_{t,\omega,x}$ if the
  sequence $(\nabla y^{\delta}\cdot \zeta^{\delta})$ is uniformly
  integrable in $\Omega \times (0,t) \times D$. In turn, the latter is
  certainly true if $(\lvert \nabla y^{\delta}\cdot \zeta^{\delta}
  \rvert)$ is dominated by a sequence that converges strongly in
  $L^1_{t,\omega,x}$. Indeed, using the assumptions on the behavior of
  $k$ at infinity as well as the generalized Jensen inequality for
  sub-Markovian operators (see \cite{Haa07}), one has
  \[
  \pm c^2\zeta^\delta\cdot\nabla y^\delta\lesssim1+
  k(c\nabla y^\delta)+k^*(c\zeta^\delta)
  \leq1+ (I-\delta\Delta)^{-m}\left(k(c\nabla y)+k^*(c\zeta)\right),
  \]
  where the sequence on the right-hand side converges in
  $L^1_{t,\omega,x}$ as $\delta \to 0$ because, by assumption,
  $k(c\nabla y) + k^*(c\zeta) \in L^1_{t,\omega,x}$.
\end{proof}

It\^o's formula yields
\begin{align*}
  &\E \norm{u_\lambda(t)}^2 
  + 2\E\int_0^t\!\!\int_D \gamma_\lambda(\nabla u_\lambda) \cdot \nabla u_\lambda
  + 2\lambda \E \int_0^t \norm{\nabla u_\lambda}^2\\
  &\hspace{3em} = \E \norm{u_0}^2 
  + \E \int_0^t \norm{G(s)}^2_{\cL^2(H,L^2_x)}\,ds
\end{align*}
and, by Lemma \ref{lm:ito},
\[
\E\norm{u(t)}^2
+ 2 \E \int_0^t\!\!\int_ D \eta \cdot \nabla u = \E\norm{u_0}^2
+ \E \int_0^t \norm{G(s)}^2_{\cL^2(H,L^2_x)}\,ds.
\]
One then has
\begin{align*}
  &2 \limsup_{\lambda\to0} \E\int_0^T\!\!\int_D 
   \gamma_\lambda(\nabla u_\lambda(s)) \cdot \nabla u_\lambda(s)\,dx\,ds\\
  &\hspace{3em} \leq \E\norm{u_0}^2
  + \E \int_0^T \norm{G(s)}^2_{\cL^2(H,L^2_x)}\,ds
  - \liminf_{\lambda\to0} \E\norm{u_\lambda(T)}^2\\
  &\hspace{3em} \leq \E\norm{u_0}^2
  + \E \int_0^T \norm{G(s)}^2_{\cL^2(H,L^2_x)}\,ds
  - \E\norm{u(T)}^2\\
  &\hspace{3em} =\E \int_0^T\!\!\int_D \eta(s) \cdot \nabla u(s)\,dx\,ds.
\end{align*}
Since $\nabla u_\lambda \to \nabla u$ and $\gamma_\lambda(\nabla
u_\lambda) \to \eta$ weakly in $L^1_{t,\omega,x}$, this
implies that $\eta \in \gamma(\nabla u)$ a.e. in
$\Omega\times(0,T)\times D$.  We have thus proved the existence and
weak continuity statements of Proposition~\ref{prop:add}.

\medskip

In order to show that the solution is unique, we are going to prove
that \emph{any} solution depends continuously on $(u_0,G)$. Let
$(u_i,\eta_i)$, $i=1,2$, satisfy
\[
u_i(t) - \int_0^t \div \eta_i(s)\,ds = u_0 + \int_0^t G_i(s)\,ds
\]
in the sense of Definition~\ref{def:strong}, as well as the
integrability conditions (on $u$ and $\eta$) of
Theorem~\ref{thm:main}.  Setting $y := u_1-u_2$, $y_0 :=
u_{01}-u_{02}$, $\zeta:=\eta_1-\eta_2$, and $F:=G_1-G_2$, one has
\[
y(t) - \int_0^t \div\zeta(s)\,ds = y_0 + \int_0^t F(s)\,dW(s)
\]
$\P$-a.s. in $L^2(D)$ for all $t \in [0,T]$. For any process $h$, let
us use the notation $h^\alpha(t):=e^{-\alpha t} h(t)$. For any
$\alpha>0$, the integration-by-parts formula yields
\[
y^{\alpha}(t) + \int_0^t (-\div\zeta^{\alpha}(s)+\alpha y^{\alpha}(s))\,ds 
= y_0 + \int_0^t F^{\alpha}(s)\,dW(s),
\]
hence also, thanks to Lemma \ref{lm:ito},
\begin{align*}
  &\E\norm{y^{\alpha}(t)}^2
  + 2\alpha \E \int_0^t \norm{y^{\alpha}(s)}^2\,ds
  + 2 \E \int_0^t\!\!\int_D \zeta^{\alpha}(s) \cdot \nabla y^{\alpha}(s)\,dx\,ds\\
  &\hspace{3em} \leq \E \norm{y_0}^2
  + \E \int_0^t \norm{F^{\alpha}(s)}_{\cL^2(H,L^2_x)}^2\,ds,
\end{align*}
where $\zeta^\alpha \cdot \nabla y^\alpha \geq 0$ by
monotonicity. Therefore, taking the $L^\infty_t$ norm,
\[
\norm{y^\alpha}_{L^\infty_t L^2_{\omega,x}} + 
\sqrt{\alpha} \norm{y^\alpha}_{L^2_{t,\omega,x}} \lesssim
\norm{y_0}_{L^2_{\omega,x}} + \norm{F^\alpha}_{L^2_{t,\omega}\cL^2(H,L^2_x)},
\]
that is, using the notation $L^p_t(\alpha):=L^p([0,T],e^{-\alpha
  t}dt)$ for any $p \geq 0$,
\begin{equation}
  \label{eq:alfa}
\begin{split}
&\norm{u_1-u_2}_{L^\infty_t(\alpha) L^2_{\omega,x}} + 
\sqrt{\alpha} \norm{u_1-u_2}_{L^2_t(\alpha) L^2_{\omega,x}}\\
&\hspace{3em} \lesssim
\norm{u_{01}-u_{02}}_{L^2_{\omega,x}} 
+ \norm{G_1-G_2}_{L^2_t(\alpha) L^2_\omega \cL^2(H,L^2_x)}.
\end{split}
\end{equation}
Taking $\alpha = 0$ and $G_1=G_2$ immediately yields the uniqueness of
solutions (as well as Lipschitz-continuous dependence on the initial
datum). The proof of Proposition~\ref{prop:add} is thus complete.

\section{Proof of Theorem~\ref{thm:main}}
For any $v \in L^2_{t,\omega,x}$ measurable and adapted, and any
$\cF_0$-measurable random variable $u_0 \in L^2_{\omega,x}$, the
process $B(\cdot,v)$ is measurable, adapted, and belongs to
$L^2_{t,\omega} \cL^2(H,L^2_x)$, hence the equation
\[
du(t) -\div\gamma(\nabla u(t))\,dt \ni B(t,v(t))\,dW(t),
\qquad u(0)=u_0,
\]
is well-posed in the sense of Proposition~\ref{prop:add}. Moreover,
for any $v_1$, $v_2$ and $u_{01}$, $u_{02}$ satisfying the same
hypotheses on $v$ and $u_0$, respectively, \eqref{eq:alfa} yields
\begin{align*}
&\norm{u_1-u_2}_{L^\infty_t(\alpha) L^2_{\omega,x}} + 
\sqrt{\alpha} \norm{u_1-u_2}_{L^2_t(\alpha) L^2_{\omega,x}}\\
&\hspace{3em} \lesssim
\norm{u_{01}-u_{02}}_{L^2_{\omega,x}} 
+ \norm{B(\cdot,v_1)-B(\cdot,v_2)}_{L^2_t(\alpha) L^2_\omega \cL^2(H,L^2_x)}.
\end{align*}
It hence follows by the Lipschitz-continuity of $B$ that
\begin{equation}
\label{eq:alfetta}
\norm{u_1-u_2}_{L^2_t(\alpha) L^2_{\omega,x}} \lesssim
\frac{1}{\sqrt{\alpha}} \Bigl( \norm{u_{01}-u_{02}}_{L^2_{\omega,x}}
+ \norm{v_1-v_2}_{L^2_t(\alpha) L^2_{\omega,x}} \Bigr),
\end{equation}
where the implicit constant does not depend on $\alpha$. In
particular, denoting by $\Gamma$ the map $(u_0,v) \mapsto u$, one has
that $v \mapsto \Gamma(u_0,v)$ is a strict contraction of
$L^2_t(\alpha) L^2_{\omega,x}$ for $\alpha$ large enough. Therefore,
by equivalence of norms, $v \mapsto \Gamma(u_0,v)$ admits a unique
fixed point in $L^2_{t,\omega,x}$, which solves \eqref{eq:0} and
satisfies all integrability conditions. Such solution is unique as any
solution is a fixed point of $v \mapsto \Gamma(u_0,v)$.

Let us show that the solution map $u_0 \mapsto u$ is
Lipschitz-continuous: \eqref{eq:alfetta} yields, choosing $\alpha$
large enough,
\[
\norm{u_1-u_2}_{L^2_t(\alpha) L^2_{\omega,x}}
\leq N_1 \norm{u_{01}-u_{02}}_{L^2_{\omega,x}}
+ N_2 \norm{u_1-u_2}_{L^2_t(\alpha) L^2_{\omega,x}}
\]
with constants $N_1>0$ and $0 < N_2 < 1$, hence, by equivalence
of norms,
\[
\norm{u_1-u_2}_{L^2_t L^2_{\omega,x}}
\lesssim \norm{u_{01}-u_{02}}_{L^2_{\omega,x}}.
\]
This in turn implies, in view of \eqref{eq:alfa} (with $\alpha=0$) and
the Lipschitz-continuity of $B$,
\begin{align*}
  \norm{u_1-u_2}_{L^\infty_t L^2_{\omega,x}}
  &\lesssim \norm{u_{01}-u_{02}}_{L^2_{\omega,x}}
  + \norm{B(\cdot,u_1)-B(\cdot,u_2)}_{L^2_{t,\omega}\cL^2(H,L^2_x)}\\
  & \lesssim \norm{u_{01}-u_{02}}_{L^2_{\omega,x}}
  + \norm{u_1-u_2}_{L^2_t L^2_{\omega,x}}
  \lesssim \norm{u_{01}-u_{02}}_{L^2_{\omega,x}},
\end{align*}
which completes the proof.

\medskip

\noindent\emph{Remark.} A priori estimates entirely analogous to those
of Lemma~\ref{lm:aspetta}, as well as weak compactness results exactly
as in Lemma~\ref{lm:L1a}, can be proved for the regularized equation
obtained by replacing $\gamma$ with $\gamma_\lambda + \lambda\nabla$
directly in \eqref{eq:0}. It is however not immediately clear how to
pass to the limit as $\lambda \to 0$ in the stochastic integrals
appearing in such regularized equations with multiplicative noise,
i.e. to show that $B(u_\lambda) \cdot W$ converges to $B(u) \cdot W$
in a suitable sense.

\bibliographystyle{amsplain}
\bibliography{ref}

\end{document}